\documentclass[12pt,a4paper]{article}
\usepackage{graphicx}
\usepackage[]{amsmath,amssymb,amsthm}
\usepackage{paralist}
\usepackage{epstopdf}

\newtheorem{theorem}{Theorem}[section]
\newtheorem{proposition}[theorem]{Proposition}
\newtheorem{lemma}[theorem]{Lemma}

\theoremstyle{definition}
\newtheorem{definition}[theorem]{Definition}
\theoremstyle{remark}

\newcommand{\R}{\mathbb{R}}
\newcommand{\N}{\mathbb{N}}

\begin{document}

\begin{center}
{\large{\bf Learning by replicator and best-response: the importance of being indifferent}}\\
\mbox{} \\
\begin{tabular}{c}
{\bf Sofia B.\ S.\ D.\ Castro$^{\dagger}$} \\
{\small sdcastro@fep.up.pt} 
\end{tabular}

\end{center}

\noindent $^{\dagger}$ Faculdade de Economia and Centro de Matem\'atica, Universidade do Porto, Rua Dr.\ Roberto Frias, 4200-464 Porto, Portugal; fax: +351 225 505 050; phone: +351 225 571 100.

\begin{abstract}
This paper compares two learning processes, namely those generated by replicator and best-response dynamics, from the point of view of the asymptotics of play. We base our study on the intersection of the basins of attraction of locally stable pure Nash equilibria for replicator and best-response dynamics. 
Local stability implies that the basin of attraction has positive measure but there are examples where the intersection of the basin of attraction for replicator and best-response dynamics is arbitrarily small.
We provide conditions, involving the existence of an unstable interior Nash equilibrium, for
the basins of attraction of {\em any} locally stable pure Nash equilibrium under replicator and best-response dynamics to intersect in a set of positive measure. Hence, for any choice of initial conditions in sets of positive measure, if a pure Nash equilibrium is locally stable, the outcome of learning under either procedure coincides.
We provide examples illustrating the above, including some for which the basins of attraction exactly coincide for both learning dynamics. We explore the role that indifference sets play in the coincidence of the basins of attraction of the stable Nash equilibria.
\end{abstract}

\noindent {\em Keywords:} best-response dynamics; replicator dynamics; learning; basin of attraction

\noindent {\em JEL code:} C73
\vspace{.3cm}

\paragraph{Acknowledgements:}
I am grateful to S.\ van Strien for stimulating conversations. These took place during a visit of mine to Imperial College London, whose hospitality is gratefully acknowledged.

Many thanks also to J.\ Hofbauer for his insightful comments on an earlier version of this paper, and to J.\ Gaspar for help with the numerical simulations.

This research was partly supported by Centro de Matem\'atica da Universidade do Porto (UID/MAT/00144/2013), funded by the Portuguese Government through the Funda\c{c}\~ao para a Ci\^encia e a Tecnologia with national (Minist\'erio da Educa\c{c}\~ao e Ci\^encia) and European structural funds through the programs FEDER, under the partnership agreement PT2020, as well as by a grant from the Reitoria da Universidade do Porto.

\section{Introduction}
Learning in games has been drawing researchers' attention from many viewpoints in the hope to predict play. This prediction depends naturally upon the learning procedure and hence, it becomes interesting to understand how different learning mechanisms compare to one another. 
In particular, it is interesting to understand how predictions are robust to different learning specifications.
We address this question, in a simple continuous time setting, for models where there is a choice of Nash equilibria by comparing the set of points that converge to a given Nash equilibrium under two distinct learning processes. 

From an experimental perspective, Erev and Roth (1998) and Roth and Erev (1995)\footnote{The bibliographic references presented in this introduction are merely an illustration of the points made. They do not intend to be comprehensive in any way. Several survey articles exist already that serve this purpose. See, for instance, Hofbauer (2011), Hofbauer and Sigmund (2003) or Sandholm (2012).} 
try to establish which model of learning best fits the learning mechanisms displayed by subjects in experiments. The options are reinforcement learning or stochastic fictitious play (or a combination of both). Considering discrete time modelling, Hopkins (2002) stresses the similarities between these two learning procedures by showing that, asymptotically, reinforcement learning can generate the same result as stochastic fictitious play. Convergence properties of reinforcement learning have been established by Beggs (2005), whereas Mengel (2012) shows that a reinforcement learning across many games can destabilise strict Nash equilibria. 

When more than one equilibrium is available, the question of choice among the Nash equilibria arises. Duffy and Hopkins (2005) show that there is convergence to a pure Nash equilibrium under both reinforcement learning and stochastic fictitious play in market entry games.

The present work focusses on the two classic learning procedures of (con\-ti\-nuous-time) {\em best-response} (and, implicitly, its time-scaled fictitious play) and {\em replicator}, namely from the point of view of convergence properties. See Brown (1949), 
Matsui (1992)
and Hofbauer and Sigmund (2003) 
for recalling the notions of fictitious play, best-response and replicator dynamics, respectively. This study is particularly relevant when more than one pure Nash equilibrium is available.
It is well-known that Nash equilibria for these learning procedures are the same.  
Existence, and even local stability, is clearly not sufficient for predicting play since the choice of play towards one or another equilibrium greatly depends on initial conditions. The choices made from a given initial condition depend upon the basin of attraction in which they are found.
Zhang and Hofbauer (2015) 
discuss equilibrium selection in a $2 \times 2$ coordination game under replicator dynamics. One of their selection methods compares the size of the basin of attraction of various equilibria.
However, except when a unique equilibrium is globally stable (see Hofbauer and Sandholm (2009)
for examples in the context of stable games),
the basins of attraction of equilibria need not coincide for replicator and best-response dynamics. 
Golman and Page (2010) 
construct a one-parameter family of $3 \times 3$ games for which the basins of attraction of one equilibrium, under the two learning rules (replicator and best-response), intersect in a set of vanishing measure. Hence, for practical purposes, the two learning procedures predict very different outcomes of play. In fact, almost all initial conditions that converge to a given equilibrium under replicator dynamics, do not do so under best-response dynamics. This result persists in the context of aggregate behaviour of populations, see Golman (2011). 

When more than one pure Nash equilibrium is available as a possible outcome, in the sense that it is locally stable, we are interested in comparing the set of points from which an equilibrium is chosen under the two learning procedures of replicator and best-response. 
We show that the existence of an unstable interior Nash equilibrium, together with an invariance assumption,
guarantees that the intersection of the basins of attraction of {\em any} locally stable pure Nash equilibrium under replicator and best-response dynamics is not arbitrarily small. Hence, with non-vanishing probability, there exist initial conditions for which learning under replicator produces the same outcome as learning under best-response.
Our results thus offer insight into the relevance of the existence of a fully mixed Nash equilibrium. The existence of a fully mixed Nash equilibrium guarantees that there is at least one point in state space for which a player is indifferent among all actions, or equivalently, every type is present in the corresponding mix for the population. 

The size of the sets of initial conditions from which learning produces the same outcome under replicator and best-response dynamics is related to the existence of invariant lines of states at which the player in indifferent between exactly two strategies.
We provide examples illustrating both applications of our result and the importance of our hypothesis. The latter include a family of games constructed in Golman and Page (2010) and a new family of games created for this purpose.
The examples illustrative of our result are obtained from Zeeman's (1980) 
classification of replicator dynamics for $3 \times 3$ games. The examples, as developed here, address an open question left by Zhang and Hofbauer (2015), 
namely, that of comparing basin dominance of equilibria under different learning dynamics. 

It should be clear that when the basins of attraction for replicator and best-response dynamics intersect in a set of positive measure, play need not proceed in the same way, nor need it produce the same asymptotic behaviour, for both dynamics and from {\em all} initial conditions. We show examples where attraction properties of play coincide exactly, that is, for each initial condition the same Nash equilibrium is chosen under replicator and best-response dynamics. These examples exhibit invariant spaces for both types of dynamics which divide the  state space into invariant regions. Generically, saying that basins of attraction intersect in a set of positive measure is to say that, with positive probability, there are initial conditions from which players choose converging actions. Our examples suggest that the intersection of basins of attraction of pure Nash equilibria in the presence of a fully mixed Nash equilibrium is a large subset of state space. Therefore, play under replicator and best-response dynamics produces the same outcome with high probability.

The next section details the preliminary notions and results required for reading this article. Section~\ref{sec:equivalence} shows that if a fully mixed unstable Nash equilibrium exists, and an easily verifiable invariance condition holds, then learning by replicator and best-response dynamics produces the same outcome for a set of initial conditions that is non-vanishing. This section also establishes some additional results concerning the two learning mechanisms. In particular, it provides a sufficient condition for the exact coincidence of the basins of attraction of a pure Nash equilibrium under both replicator and best-response dynamics.
Section~\ref{sec:examples} provides some illustrative examples in $3 \times 3$ games. The final section concludes.

\section{Preliminaries}

As usual in population dynamics, we assume that the population consists of $n$ different types and denote by $x_i$, $i=1, \hdots, n$ the frequency of each type. It is clear that these frequencies must sum to one. Variations in the frequencies depend on how the fitness of each frequency compares to the average fitness of the population, when learning occurs by replicator dynamics. Under best-response dynamics, the update of the frequencies is made by choosing a best-response to the current mix in the population. 

Denote by $\Delta $ the set of frequency/probability vectors, that is,
$$
\Delta = \{ x \in \R^n: \; \; x_1+ \hdots + x_n =1; \;\; x_i \geq 0 \}.
$$
This is the natural state space for the game dynamics we consider.
Let $\partial \Delta$ denote the boundary of $\Delta$
and $int(\Delta)$ its interior.

Consider a game described by an $n \times n$ matrix $A$. If the game is played according to {\em replicator dynamics} (henceforth, RD) then each $x_i \in [0,1]$ evolves according to the following rule
\begin{equation}\label{eq:RD}
\dot{x}_i = x_i \left( (Ax)_i-x. Ax \right), \;\; i=1, \hdots, n.
\end{equation}
If, on the other hand, the game is played according to {\em best-response dynamics} (henceforth, BRD) then each state variable follows
\begin{equation}\label{eq:BRD}
\dot{x}_i \in \mbox{BR}(x_i)-x_i, \;\; i=1, \hdots, n.
\end{equation}
We denote by $e_i$, $i=1, \hdots, n$ the unit vectors in $\Delta$ and write BR$(x)=e_i$ when a best-response to $x$ is the choice of action $i$. We define, for $i=1, \hdots, n$, the set
$$
BR_i = \{ x \in \Delta : \; \; BR(x)=e_i \}.
$$

Even though BRD is a differential inclusion, it is really a differential equation except for those points in the {\em indifference sets}
\begin{equation}\label{eq:indifference}
Z_{i,j} = \{ x \in \Delta : \; \; (Ax)_i=(Ax)_j\}, \;\; i\neq j \in \{1, \hdots, n\}.
\end{equation}
It has been shown (see, for instance, Zeeman (1980)) 
that $A$ can be chosen so that $a_{ii}=0$ for all $i=1, \hdots, n$, which we assume to be the case from now on.
Notice that, since the diagonal of $A$ has only zeros, $(Ax)_i$ does not depend on $x_i$ for all $i=1, \hdots, n$. It is also worthwhile to mention that if $e_k \in Z_{i,j}$ for some $k=1, \hdots , n$ then the equation for $Z_{i,j}$ does not depend on $x_k$. This is a consequence of the fact that the equation for $Z_{i,j}$ is homogeneous as follows:

\begin{lemma}\label{lem:homogeneous}
If $e_k \in Z_{i,j}$ for some $k=1, \hdots , n$ then the equation for $Z_{i,j}$ does not depend on $x_k$.
\end{lemma}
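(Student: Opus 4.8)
The plan is to treat the defining relation of $Z_{i,j}$ as a single homogeneous linear equation in $x$ and to read off the coefficient of $x_k$ directly. First I would rewrite the condition $(Ax)_i = (Ax)_j$ as
\begin{equation*}
(Ax)_i - (Ax)_j = \sum_{l=1}^n (a_{il} - a_{jl})\, x_l = 0,
\end{equation*}
which is a homogeneous linear equation in the frequencies; the coefficient multiplying $x_k$ is precisely $a_{ik} - a_{jk}$. The assertion that the equation for $Z_{i,j}$ does not depend on $x_k$ is therefore equivalent to the vanishing of this single coefficient, i.e.\ to the identity $a_{ik} = a_{jk}$.

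The second step is to translate the hypothesis $e_k \in Z_{i,j}$ into exactly this identity. Since $A e_k$ is the $k$-th column of $A$, one has $(A e_k)_i = a_{ik}$ and $(A e_k)_j = a_{jk}$, so the membership $e_k \in Z_{i,j}$ says exactly that $(A e_k)_i = (A e_k)_j$, that is $a_{ik} = a_{jk}$. Substituting back, the coefficient of $x_k$ in the equation for $Z_{i,j}$ is zero, and the conclusion follows immediately.

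I do not expect a genuine obstacle here: the statement is essentially the elementary observation that, for a homogeneous linear form, having a standard basis vector $e_k$ in its zero set is equivalent to the $k$-th coefficient being absent. The only points that require care are bookkeeping ones, namely evaluating $A e_k$ on the columns of $A$ and keeping the index differences straight (the assumption $a_{ii}=0$ is already absorbed into the expressions $(Ax)_i$ and plays no further role). A short closing remark could note that the same computation shows more generally that $Z_{i,j}$ is the intersection of $\Delta$ with the linear hyperplane through the origin determined by the difference vector $(a_{il}-a_{jl})_{l}$, which makes the dependence on each coordinate fully explicit.
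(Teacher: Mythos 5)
Your proof is correct and follows essentially the same route as the paper's: both treat the defining relation of $Z_{i,j}$ as a homogeneous linear equation and observe that substituting $e_k$ forces the coefficient of $x_k$ to vanish. You merely make the coefficients explicit as $a_{ik}-a_{jk}$ where the paper leaves them abstract, which is a harmless (indeed slightly more informative) presentational difference.
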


\begin{proof}
The equation for $Z_{i,j}$ is of the form
$$
\sum_{i=1}^n a_i x_i = 0,
$$
where the real coefficients $a_i$ depend on the entries of $A$. The point $e_k$ written in coordinates is such that $x_k=1$ and $x_i=0$ for $i \neq k$. If $e_k \in Z_{i,j}$ then $x_k=1$ and $x_i=0$ for $i \neq k$ must solve the equation for $Z_{i,j}$. Direct substitution yields $a_k=0$.
\end{proof}

Nash equilibria are singularities of the dynamics and coincide under RD and BRD. If it exists, an interior or fully mixed Nash equilibrium occurs at the simultaneous intersection of all the indifference sets. We say that a Nash equilibrium, $x$, is {\em locally stable} if there exists an open neighbourhood, containing the Nash equilibrium, so that the orbits of points in this neighbourhood are attracted to the Nash equilibrium without leaving a possibly bigger neighbourhood of $x$. In the language of dynamical systems, the Nash equilibrium is locally asymptotically stable. The set of all points whose orbit is attracted to the Nash equilibrium is called its {\em basin of attraction}. We distinguish between the basins of attraction of the same Nash equilibrium, $x$, for different dynamics by writing ${\cal B}_{BRD}(x)$ and ${\cal B}_{RD}(x)$ for the basins of attraction under BRD and RD, respectively.

There are however points that are singularities of RD but are not Nash equilibria. According to the definition of a singularity for a differential equation, these are points for which the right-hand side of (\ref{eq:RD}) vanishes, which occurs either because $x_i=0$ or because $(Ax)_i=x. Ax$ if $x_i \neq 0$. This motivates the following definition of a singularity or stationary point for BRD. With this definition, singularities of RD and BRD coincide, except possibly at the vertices of $\Delta$. Recall that while $\partial \Delta$ is invariant for RD, that need not be the case for BRD.

\begin{definition}\label{singularity}
A point $x \in \partial \Delta$ is a singularity or stationary point for BRD if and only if $x \in Z_{i,j} \cap \{ x_k=0: \; \mbox{for all } \; k \neq i,j \}$.
\end{definition}

We assume throughout that 
\paragraph{Assumption A:} $Z_{i,j} \neq Z_{k,l}$ for all $\{i,j\} \neq \{k,l\}$.  
\medskip

This is a non-degeneracy condition ensuring that a player is not indifferent to more than two actions at a time, unless it is at a Nash equilibrium. 

\section{Equivalence of learning outcomes}\label{sec:equivalence}

We use the expression ``equivalence of learning outcomes'' as a way of expressing the fact that there is a non-vanishing set of initial conditions from which RD and BRD converge to the same Nash equilibrium. 

We start by pointing out some features of the two types of dynamics, mostly of geometric nature. Denote by $S_i$ the smallest open sector bounded by indifference sets and containing $e_i$. Note that the best-response is constant in $S_i$. When $e_i$ is a locally stable Nash equilibrium, we have $BR(x)=e_i$ for all $x \in S_i$ and one of the indifference sets in the boundary of $S_i$ is of the form $Z_{i,j}$. In this case, there are no invariant sets for BRD in $S_i$ and $S_i \subseteq BR_i$. Given Proposition 5.1 in Hofbauer {\em et al.} (2009), stating that the time-average of orbits of RD must converge to an invariant set under BRD, time-averages of orbits for RD of points in $S_i$ converge to $\partial S_i$, since there are no sets invariant for BRD in $S_i$.

Recalling Definition~\ref{singularity}, it is clear that the intersection of the boundary of $S_i$ and that of $\Delta$ contains no other singularities, for RD or BRD, than $e_i$ and, eventually, any singularities in $Z_{i,j} \cap \{ x_k=0: \; k \neq i,j \}$.

The set $S_i$ is non-empty and, under Assumption A, it is a set of positive measure. Its measure is determined by the space between the indifference sets of its boundary.

In Figure~\ref{fig:set_S_i}, we illustrate the set $S_i$, for $i=2$, for the game depicted in Figure~\ref{C6_2}. We choose this game because the indifference sets are generic. The game belongs to class $6_2$ of the classification in Zeeman (1980), see Section~\ref{sec:examples} for more detail.
The set $S_2$ is the open sector whose boundary is that of $\Delta$ together with the segments of $Z_{1,3}$ and $Z_{1,2}$ connecting $\partial \Delta$ to the interior Nash equilibrium. In this case, $S_2=S_1$ but in what follows we use $S_i$ only when $e_i$ is a Nash equilibrium. Hence, the choice $i=2$.

\begin{figure}[!htb]
\centerline{\includegraphics[width=0.5\textwidth]{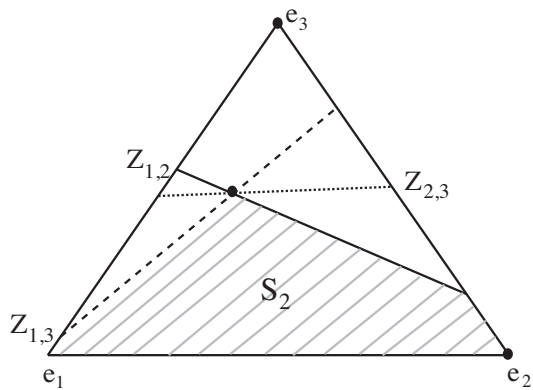}}
\caption{\small{The set $S_2$ for a game in class $6_2$ of Zeeman (1980). Full dots indicate Nash equilibria. The set $S_2$ is the open sector whose boundary is that of $\Delta$ together with the segments of $Z_{1,3}$ and $Z_{1,2}$ connecting $\partial \Delta$ to the interior Nash equilibrium. In this case, $S_2=S_1$.}
\label{fig:set_S_i}}
\end{figure}

The next result establishes conditions that guarantee equivalence of learning outcomes in a game.

\begin{theorem}\label{th:basins}
Consider a game with a pure locally stable Nash equilibrium, $e_i$, for which Assumption A holds and such that
\begin{itemize}
	\item[(H1)]  there exists an unstable fully mixed Nash equilibrium, $x^*$;
	\item[(H2)]  $S_i$ is invariant\footnote{The invariance of $S_i$ is readily checked by looking at the best-response on either side of its boundaries. Using Figure~\ref{fig:set_S_i} as an illustration, if between the boundary of $S_2$ consisting of $Z_{1,2}$ and the part of $Z_{2,3}$ immediately above it the best-response is $e_1$ then $S_2$ is invariant. If, on the other hand, the best-response for the same set of points is $e_3$ then $S_2$ is not invariant.} for RD.
\end{itemize}
Then, ${\cal B}_{RD}(e_i) \cap {\cal B}_{BRD}(e_i)$ is a set of  positive measure containing $S_i$. 
\end{theorem}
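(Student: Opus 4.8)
The plan is to establish the stronger set containment $S_i \subseteq {\cal B}_{RD}(e_i) \cap {\cal B}_{BRD}(e_i)$; the measure statement then follows at once, since under Assumption~A the sector $S_i$ already has positive measure, as recorded before the statement. I would treat the two dynamics separately, disposing of BRD by an explicit computation and reserving the hypotheses (H1)--(H2) for the RD half, where they do the real work.

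For BRD, I would exploit that $BR(x)=e_i$ holds throughout $S_i$, so on $S_i$ the inclusion~(\ref{eq:BRD}) collapses to the genuine linear equation $\dot{x}=e_i-x$, whose solution issuing from $x_0$ is the explicit curve $x(t)=(1-e^{-t})\,e_i+e^{-t}x_0$. This parametrises the segment $[x_0,e_i]$ and tends to $e_i$ as $t\to\infty$. Because $S_i$ is a sector with apex at the interior equilibrium and opening towards $e_i$, it is star-shaped (indeed convex) with respect to $e_i$, so this segment remains in $S_i\subseteq BR_i$; the formula is thus self-consistent and every orbit converges to $e_i$, giving $S_i\subseteq{\cal B}_{BRD}(e_i)$.

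For RD the key is a monotonicity hidden in the best-response condition. On $S_i$ we have $(Ax)_i=\max_j (Ax)_j$, hence $x\cdot Ax=\sum_j x_j (Ax)_j \le (Ax)_i$, and therefore $\dot{x}_i=x_i\big((Ax)_i-x\cdot Ax\big)\ge 0$ by~(\ref{eq:RD}); moreover the inequality is strict on $S_i\cap int(\Delta)$, since equality would force $(Ax)_j=(Ax)_i$ for every present $j$, i.e. $x=x^*$, and $x^*$ lies on $\partial S_i$. Thus $V=1-x_i$ is a strict Lyapunov function and, using (H2) to confine orbits to the compact set $\overline{S_i}$, LaSalle's invariance principle forces the $\omega$-limit of an interior orbit into the largest invariant subset of $\{(Ax)_i=x\cdot Ax\}\cap\overline{S_i}$ lying on the level $\{x_i=c\}$, where $c=\lim_t x_i(t)\ge x_i(0)>0$. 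Since $c>0$ this limit cannot meet the face $\{x_i=0\}$, and by Assumption~A together with the earlier remark on $\partial S_i$ the only candidates left are the vertex $e_i$ and the fully mixed equilibrium $x^*$.

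Ruling out $x^*$ is precisely where (H1) is needed: as $x^*$ is unstable no interior orbit can have it as an $\omega$-limit, which forces $c=1$ and hence convergence to $e_i$, completing $S_i\subseteq{\cal B}_{RD}(e_i)$. I expect this last exclusion to be the main obstacle in full generality, because if $x^*$ were only a saddle its stable set could a priori meet $int(\Delta)\cap S_i$ in a measure-zero family of orbits missing $e_i$, and the same worry attaches to any edge rest point of $Z_{i,j}\cap\{x_k=0\}$ sitting on $\partial S_i$. The resolution is that instability, combined with the strict growth of $x_i$ and the invariance geometry of $S_i$, keeps $W^s(x^*)$ on the indifference edges of $\partial S_i$; in the planar $3\times 3$ setting this is transparent, and one can further bar interior periodic orbits by noting that such an orbit would have an interior time-average, contradicting the convergence of RD time-averages to $\partial S_i$ recorded above. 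Intersecting the two containments yields the theorem.
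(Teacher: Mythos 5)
Your BRD half is fine (indeed the segment argument can be made cleaner: $BR_i$ is an intersection of half-spaces, hence convex and containing $e_i$, so the explicit solution $x(t)=(1-e^{-t})e_i+e^{-t}x_0$ stays where $BR=e_i$ without needing star-shapedness of $S_i$ itself), and your RD monotonicity computation is correct and close in spirit to the paper's: on $\overline{S_i}\subseteq \overline{BR_i}$ one has $x\cdot Ax\le (Ax)_i$, so $x_i$ is nondecreasing, strictly increasing on $S_i\cap int(\Delta)$, paralleling the paper's observation that $\frac{d}{dt}(x_i/x_j)>0$ in $BR_i$. Your LaSalle reduction to rest points on a level set $\{x_i=c\}$ with $c>0$ is also legitimate, given that (H2) makes $\overline{S_i}$ forward invariant. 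So the route is genuinely different from the paper's, which never builds a Lyapunov function and instead passes to time-averages.

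The genuine gap is the exclusion step at the end, which is exactly where the paper invokes its key external tool and where you substitute assertion for proof. First, the claim ``as $x^*$ is unstable no interior orbit can have it as an $\omega$-limit'' is false: an unstable equilibrium can be a saddle attracting its stable manifold, and the paper's own class $6_2$ example (Figure~\ref{C6_2}) exhibits precisely this --- under RD the unstable interior Nash equilibrium attracts a whole curve of interior points, which is what separates the basins of $e_2$ and $e_3$. You flag the worry yourself, but your ``resolution'' (that instability plus the invariance geometry keeps $W^s(x^*)$ on $\partial S_i$) is never argued; note that strict increase of $x_i$, or of the ratios $x_i/x_j$, is perfectly compatible with monotone convergence from inside $S_i$ to $x^*$ or to any rest point whose $x_i$-coordinate exceeds the starting value, so monotonicity alone cannot deliver it. Second, your candidate list omits the boundary singularities in $Z_{i,j}\cap\{x_k=0,\ k\neq i,j\}$, which the paper explicitly records as possible points of $\partial S_i$ (Definition~\ref{singularity} and the remark following it); such a point $e_{i,j}$ has $x_i$-coordinate strictly between $0$ and $1$, so the constraint $c>0$ does not remove it, and the class $5_1$ example (Figure~\ref{C5_1}) shows orbits starting in $S_1$ that really do converge to such a point when (H2) fails --- so a correct exclusion must use (H2) essentially, which your argument does not. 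The paper closes both holes at once via Proposition 5.1 of Hofbauer, Sorin and Viossat (2009): time-averages of RD orbits converge to internally chain transitive sets of BRD, and chain transitivity, the increasing ratios, and the fact that the best-response is identically $e_i$ on $\overline{S_i}\cap\partial\Delta$ pin the limit at $e_i$, whence the trajectory itself converges. Finally, your fallback --- conceding a possible measure-zero family of orbits in $S_i$ missing $e_i$ --- does not rescue the statement, since Theorem~\ref{th:basins} asserts that the intersection of basins \emph{contains} $S_i$, so even a measure-zero exceptional set inside $S_i$ is fatal to the claimed containment.
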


\begin{proof}
The existence of a fully mixed Nash equilibrium in Hypothesis (H1) and Assumption A ensure the existence of points arbitrarily close to $x^*$ which belong to $S_i \subseteq BR_i$. Let $x$ be such a point. Then $BR(x)=e_i$ and 
$$
\frac{d}{dt}\left(\frac{x_i}{x_j}\right)=\frac{x_i}{x_j} \left[ (Ax)_i-(Ax)_j \right] >0
$$
since $x \in BR_i$ implies $(Ax)_i>(Ax)_j$ for all $j \neq i$. Hence, the ratio $x_i(t)/x_j(t)$ increases with $t$ for all $j \neq i$. The invariance condition in Hypothesis (H2) then implies that the orbit of $x$ under RD remains in $S_i$. 

In view of Proposition 5.1 in Hofbauer {\em et al.} (2009) and the local stability of $e_i$ for both dynamics, the time-average of the orbit of $x$ under RD must converge to $e_i \in \partial \Delta$, and so does the orbit of $x$ under RD. 
In fact, Proposition 5.1 establishes not just the invariance of the limit of time-averages of orbits of RD but also that this limit must be internally chain transitive\footnote{According to Hofbauer {\em et al.} (2009), ``a set $A$ is internally chain transitive if any two points $x, y \in A$ can be connected by finitely many arbitrarily long pieces of orbits lying completely within $A$ with arbitrarily small jumps between them''. A precise definition can be found in Bena\"{\i}m et al. (2005).}. Chain-transitivity, together with the fact that the ratio $x_i(t)/x_j(t)$ is increasing in $S_i$, excludes the possibility of the limit of the time-average of RD being the whole of $\partial \Delta$. Because for points in the intersection of the closure of $S_i$ with $\partial \Delta$ the best-response is still $e_i$, there are no other attractors and the time-average of RD converges to $e_i$. Since the time-average of RD converges to a pure equilibrium, then so does the trajectory itself.

Note that convergence to $e_i$ of the orbit of $x$ under RD holds for all $x \in S_i$. Hence, $S_i \subset {\cal B}_{RD}(e_i) \cap {\cal B}_{BRD}(e_i)$.

The fact that, because of Assumption A, the measure of $S_i$ is positive finishes the proof.
\end{proof}

Equivalence of learning outcomes in one-parameter families of games follows from Theorem~\ref{th:basins}. It suffices to notice that both the existence of an interior Nash equilibrium and the invariance of $S_i$ are robust under perturbation of a game. 

Recall that an interior Nash equilibrium is fully mixed in the sense that every type is present in the corresponding mix for the population, that is, every type has positive frequency. Continuity of play ensures that given any type there exist points near the Nash equilibrium for which this type has positive frequency. Then, for a Nash equilibrium, there exist initial conditions for which the corresponding type is present. If this type determines a Nash equilibrium, its basins of attraction under the two different dynamics intersect in a non-vanishing set. 
Recall that when a given type is a best-response, the frequency of this type increases under both dynamics. 
The size of ${\cal B}_{RD}(e_i) \cap {\cal B}_{BRD}(e_i)$ is bounded below by the size of $S_i$ which depends on the relative position of the indifference sets that constitute its boundary.

We note that when the pure Nash equilibrium is a uniformly ESS, a stronger notion used by Golman and Page (2010) in their Erratum\footnote{The definition of uniformly ESS may be found in page 73 of Golman and Page (2010).}, we can use their Theorem 2 to obtain a much shorter proof.
This theorem states that a pure Nash equilibrium which is uniformly ESS and whose action $a$ is a best-response for a set of points of at most measure zero, has basins of attraction for RD and BRD with vanishing intersection. Such an action is said to have the {\em Never an Initial Best Response Property}. Equivalently, the result can be stated as follows: let $a$ be an action such that $a=$BR$(x)$ for $x$ in a set of positive measure; then, if $x^*$ is a pure  Nash equilibrium, uniformly ESS, corresponding to $a$, the intersection of the basins of attraction of $x^*$ for RD and BRD is not arbitrarily small. Note that for games satisfying Theorem~\ref{th:basins} the action $a$ in Golman and Page (2010) does not satisfy the {\em Never an Initial Best Response Property} and therefore the basins of attraction for the two dynamics intersect in a set of positive measure.

\subsection{Invariant indifference sets}

When some indifference sets are invariant for either or both learning mechanisms, further information on the equivalence of learning outcomes can be obtained. We start with a sufficient condition for invariance.
The following result generalizes that proved by Ochea (2010) in Chapter 2, Lemma 3 
in the particular case of a $3 \times 3$ coordination game under RD.

\begin{lemma}\label{lem:invariant}
Let $Z_{i,j}$ be an indifference set of an $n \times n$ game. If $e_k \in Z_{i,j}$ for all $k \neq i,j$ then $Z_{i,j}$ is invariant under RD.
\end{lemma}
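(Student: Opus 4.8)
The plan is to exploit the hypothesis to rewrite the defining equation of $Z_{i,j}$ so that it involves only the coordinates $x_i$ and $x_j$, and then to show that the resulting linear form is reproduced (up to a scalar factor) by the flow of RD. First I would record the algebraic consequence of the hypothesis. Since $e_k \in Z_{i,j}$ for every $k \neq i,j$, Lemma~\ref{lem:homogeneous} says that the equation $(Ax)_i - (Ax)_j = \sum_l (a_{il}-a_{jl})x_l = 0$ carries no $x_k$-term for any $k \neq i,j$; equivalently $a_{ik}=a_{jk}$ for all $k \neq i,j$. Writing $Q = \sum_{k \neq i,j} a_{ik}x_k$ and using $a_{ii}=a_{jj}=0$, this gives $(Ax)_i = a_{ij}x_j + Q$ and $(Ax)_j = a_{ji}x_i + Q$, so that $Z_{i,j}$ is exactly the zero set of the linear form $h(x) = a_{ji}x_i - a_{ij}x_j$.

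Next I would differentiate $h$ along RD. Using \eqref{eq:RD}, and writing $\phi = x\cdot Ax$, one has $\dot h = a_{ji}x_i[(Ax)_i - \phi] - a_{ij}x_j[(Ax)_j - \phi] = a_{ji}x_i(Ax)_i - a_{ij}x_j(Ax)_j - \phi\, h$. Substituting the expressions for $(Ax)_i$ and $(Ax)_j$ obtained above, the quadratic terms $a_{ij}a_{ji}x_ix_j$ cancel and the remaining terms containing $Q$ collect into $Q(a_{ji}x_i - a_{ij}x_j) = Q\,h$. Hence $\dot h = (Q - \phi)\,h$. From here the conclusion is immediate: along any orbit of RD the quantity $h$ satisfies the scalar, linear, homogeneous equation $\dot h = (Q(t)-\phi(t))\,h$, so $h(x(0))=0$ forces $h(x(t))=0$ for all $t$ by uniqueness of solutions (equivalently by Gr\"onwall's inequality). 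Therefore $Z_{i,j}=\{h=0\}$ is invariant under RD. I would note that this argument is valid on all of $\Delta$, including the boundary, and needs no assumption that $a_{ij}$ or $a_{ji}$ be nonzero.

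The step I expect to require the most care is the factorization $\dot h = (Q-\phi)h$, and especially articulating why the hypothesis is essential rather than cosmetic. It is worth emphasizing that the identity $\frac{d}{dt}\!\left(\frac{x_i}{x_j}\right) = \frac{x_i}{x_j}\left[(Ax)_i-(Ax)_j\right]$ used in the proof of Theorem~\ref{th:basins} already makes this ratio stationary on $Z_{i,j}$ for \emph{any} game, yet $Z_{i,j}$ is not invariant in general. The reason is that without the hypothesis the defining equation of $Z_{i,j}$ genuinely depends on the remaining coordinates $x_k$, so $Z_{i,j}$ is not a level set of $x_i/x_j$, the cross terms fail to assemble into a multiple of $h$, and the vanishing of $\dot h$ on $Z_{i,j}$ does not propagate along orbits. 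The hypothesis, through Lemma~\ref{lem:homogeneous}, is precisely what eliminates those $x_k$-terms and produces the clean linear-homogeneous structure that yields invariance.
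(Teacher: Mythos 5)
Your proof is correct, and it takes a genuinely different route from the paper's, in an instructive way. Both arguments rest on the same key input, Lemma~\ref{lem:homogeneous}, which under the hypothesis reduces the defining equation of $Z_{i,j}$ to a relation in $x_i,x_j$ alone; but the paper works with the ratio $x_i/x_j$: for $x\in Z_{i,j}$ it notes $(Ax)_i=(Ax)_j$, hence $\dot x_i/x_i=\dot x_j/x_j$, integrates to get $x_i(t)=\frac{x_i(0)}{x_j(0)}x_j(t)$, and concludes because $Z_{i,j}$ is the level set $x_i/x_j=K$. That is shorter, but it has two soft spots your version avoids. First, the integration step as written uses $(Ax(t))_i=(Ax(t))_j$ along the whole orbit, which is essentially the invariance being proved; the pointwise identity at $t=0$ alone does not integrate, and a rigorous completion requires something like the scalar equation $\dot r = a_{ji}\,r\,x_j\,(K-r)$ for $r=x_i/x_j$ plus uniqueness. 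Second, the division by $x_i,x_j$ and the logarithms require $x_i,x_j>0$, so the boundary faces of $\Delta$ need separate treatment. Your global factorization $\dot h=(Q-\phi)\,h$ for the linear form $h=a_{ji}x_i-a_{ij}x_j$ holds identically on all of $\Delta$, not merely on $Z_{i,j}$, so invariance follows from uniqueness (or Gr\"onwall) with no circularity and with the boundary included for free; the price is a slightly longer computation, which checks out --- the cross terms $a_{ij}a_{ji}x_ix_j$ cancel and the $Q$-terms assemble into $Qh$ exactly as you claim, using $a_{ik}=a_{jk}$ for $k\neq i,j$ from Lemma~\ref{lem:homogeneous} and $a_{ii}=a_{jj}=0$. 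Your closing remark also correctly isolates why the hypothesis is essential rather than cosmetic: without it $Z_{i,j}$ is not a level set of a function of $x_i,x_j$ alone, so the stationarity of $x_i/x_j$ on the set (which holds for any game) does not propagate along orbits --- a distinction the paper's proof leaves implicit.
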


\begin{proof}
Let $x \in Z_{i,j}$. According to equation (\ref{eq:indifference}), we have $(Ax)_i=(Ax)_j$ and therefore,
$$
\frac{\dot{x}_i}{x_i}=\frac{\dot{x}_j}{x_j}.
$$
Integrating with respect to $t$, we obtain for all $t$
$$
\log{x_i(t)}-\log{x_i(0)} = \log{x_j(t)}-\log{x_j(0)} \Leftrightarrow x_i(t) = \frac{x_i(0)}{x_j(0)} x_j(t).	
$$
Since $e_k \in Z_{i,j}$ for all $k \neq i,j$, the equation for $Z_{i,j}$ is of the form $x_i/x_j=K \in \R$ (see Lemma~\ref{lem:homogeneous}). The fact that $x \in Z_{i,j}$ finishes the proof.
\end{proof}

As a consequence, we establish a sufficient condition which prevents the occurrence of cyclic behaviour in $n \times n$ games. When cyclic behaviour occurs the frequency of each type increases and decreases in turn. See Figure~\ref{C5_1} (left) for an illustration.

\begin{proposition}\label{prop:no_cyclic}
In a $n \times n$ game for which there exist $n-2$ indifference sets such that $e_k \in Z_{i,j}$ for $k \neq i,j$, RD exhibits no cyclic behaviour.
\end{proposition}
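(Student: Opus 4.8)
The plan is to convert the invariance furnished by Lemma~\ref{lem:invariant} into a strictly monotone quantity along orbits, on the principle that a strictly monotone quantity can never return to its initial value and so forbids closed orbits. First I would apply Lemma~\ref{lem:invariant} to conclude that each of the $n-2$ indifference sets $Z_{i,j}$ singled out by the hypothesis (those with $e_k \in Z_{i,j}$ for all $k \neq i,j$) is invariant under RD, and then Lemma~\ref{lem:homogeneous} to write each of them as a ratio surface $\{x : x_i/x_j = K_{i,j}\}$ for a constant $K_{i,j} \in \R$. In particular each such $Z_{i,j}$ splits $\Delta$ into the two invariant regions $\{x_i/x_j > K_{i,j}\}$ and $\{x_i/x_j < K_{i,j}\}$.

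The central computation is that for such a pair $\frac{d}{dt}\log(x_i/x_j) = (Ax)_i - (Ax)_j$, and the vanishing of the off-diagonal coefficient differences (Lemma~\ref{lem:homogeneous}) together with $a_{ii}=a_{jj}=0$ collapses this to $a_{ij}x_j - a_{ji}x_i$. This expression vanishes exactly on $Z_{i,j}$ and keeps a fixed sign on each side of it. Since an orbit not contained in $Z_{i,j}$ cannot cross this invariant surface, it remains on one side, so $\frac{d}{dt}(x_i/x_j)$ has constant sign and the ratio $x_i/x_j$ is strictly monotone along that orbit. A strictly monotone function along an orbit precludes periodicity, so no closed orbit can involve an orbit lying off these surfaces.

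It then remains to exclude cycles contained in the invariant surfaces themselves. I would argue that any periodic orbit $\gamma$ must lie inside every one of the $n-2$ surfaces: were $\gamma$ to meet some $Z_{i,j}$ without being contained in it, it would have to cross the invariant surface (impossible), while if $\gamma$ avoided $Z_{i,j}$ entirely the ratio $x_i/x_j$ would be strictly monotone on $\gamma$, contradicting periodicity; the only survivor is $\gamma \subseteq Z_{i,j}$. Hence $\gamma$ lies in the common intersection of all $n-2$ ratio surfaces, which is carved out inside the $(n-1)$-dimensional simplex by $n-2$ linear relations and is therefore at most one-dimensional. As the simplex is convex, a one-dimensional invariant subset is a segment, and a flow on a segment is monotone and admits no closed orbit, which finishes the argument.

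The hard part will be the last step: justifying that the $n-2$ ratio surfaces are sufficiently independent that their intersection is genuinely one-dimensional. This is not automatic, since if the chosen pairs form a cycle in the index graph the corresponding linear forms can become dependent (precisely when the product of the ratios $K_{i,j}$ around the cycle equals one), so the intersection could be higher-dimensional; the independence must be extracted from Assumption~A and the distinctness of the indifference sets, or else circumvented. A cleaner alternative I would pursue is induction on $n$: restrict RD to one invariant surface $Z_{i,j}$, where a ratio is frozen and the system reduces to a replicator dynamics on effectively $n-1$ types, and check that this reduced game inherits $n-3$ indifference sets of the same special form. A secondary point is to fix the meaning of ``cyclic behaviour'': I read it as the existence of a nonconstant closed orbit (equivalently, the oscillatory ``each frequency rises and falls in turn'' picture), for which the monotone-ratio obstruction is exactly the right instrument.
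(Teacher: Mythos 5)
Your proposal is correct in outline but takes a genuinely different route from the paper. The paper's proof is local and spectral: it reads ``cyclic behaviour'' as the interior Nash equilibrium having a pair of complex eigenvalues, observes via Lemma~\ref{lem:invariant} that each special $Z_{i,j}$ is an invariant \emph{linear} set passing through that equilibrium, so each is an invariant subspace of the Jacobian supplying real eigenvalues --- $n-2$ of them --- and then, since the state space is $(n-1)$-dimensional and complex eigenvalues come in conjugate pairs, the single remaining eigenvalue must be real. Your argument is instead global: the computation $(Ax)_i-(Ax)_j=a_{ij}x_j-a_{ji}x_i$ is correct, the sign of this form is indeed constant on each side of the invariant surface, and the resulting strict monotonicity of $x_i/x_j$ rules out closed orbits anywhere in $\Delta$, not merely oscillation near the interior equilibrium. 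That is strictly more information than the paper's two-line eigenvalue count in one direction --- but strictly less in another, and this is the substantive mismatch: in the paper's usage (see the class $5_1$ picture in Figure~\ref{C5_1}, and the correction to class $6_1$ that this proposition is invoked for) cyclic behaviour means the spiralling associated with complex eigenvalues, and a damped or expanding spiral contains \emph{no} nonconstant closed orbit. So your conclusion, as you yourself fix the definition in the last paragraph, does not yet exclude what the paper wants excluded. The repair lives inside your own framework: an orbit winding around the interior equilibrium would have to cross some invariant $Z_{i,j}$, since the fully mixed equilibrium lies on every indifference set; equivalently and more cleanly, each invariant hyperplane through the equilibrium forces a real eigenvalue on the one-dimensional quotient, which recovers the paper's count.

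On your flagged hard step: your worry about dependence of the $n-2$ linear forms is genuine, and it is worth noting that it afflicts the paper's proof equally, not just yours. If the chosen pairs contain a cycle in the index graph, the fact that the interior equilibrium lies on all the surfaces forces precisely the degeneracy you identify (the product of the constants $K_{i,j}$ around the cycle equals one, since each $K_{i,j}=x^*_i/x^*_j$), so the common intersection has dimension larger than one, and on it the frozen ratios merge the cycling types into one aggregate, leaving a lower-dimensional replicator system that is not obviously acyclic; your proposed induction degrades at exactly the same point, because the cycle's indifference sets all collapse to a single indifference set of the reduced game, leaving fewer than $n'-2$ special sets. Correspondingly, the paper's flag of invariant subspaces then yields fewer than $n-2$ guaranteed real eigenvalues. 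So for $n\geq 5$ with cyclic pair structure neither argument closes without an additional hypothesis (e.g.\ that the pairs form a forest); for the paper's actual application --- $3\times 3$ games, where $n-2=1$ and the single invariant line is itself the one-dimensional intersection --- both your argument and the paper's are clean and complete.
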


\begin{proof}
Cyclic behaviour is dependent on the existence of an interior Nash equilibrium at which the Jacobian matrix has at least one pair of complex eigenvalues. Since, by Lemma~\ref{lem:invariant}, such a $Z_{i,j}$ is invariant for RD, it is the eigenspace for some eigenvalue. This ensures the existence of $n-2$ real eigenvalues. Since the state space $\Delta$ is $(n-1)$-dimensional, all eigenvalues must be real.
\end{proof}

Note that $Z_{i,j}$ is usually not invariant for RD. It is invariant for BRD generically only at points $x$ for which BR$(x)=e_i$ and BR$(x)=e_j$. However, under the hypothesis of Proposition~\ref{prop:no_cyclic}, $Z_{i,j}$ becomes invariant for BRD as well. This is because at the points where it is usually not invariant, the best-response now contains $e_k$ and $e_k \in Z_{i,j}$ for $k \neq i,j$.

Proposition~\ref{prop:no_cyclic}, when applied to $3 \times 3$ games, contributes with a correction to Zeeman's diagram for class $6_1$ as we show in the next section. In the context of $3 \times 3$ games, Proposition~\ref{prop:no_cyclic} states that provided that one indifference set is invariant there is no cyclic behaviour.

Recall that $Z_{i,j}$ divides $\Delta$ into two connected components. The invariance of these connected components for RD follows from that of the set $Z_{i,j}$, provided no other singularities exist in these connected components. A condition leading to the invariance of the connected components is thus provided by Lemma~\ref{lem:invariant}. In this case, $e_i, e_j \notin Z_{i,j}$ may be Nash equilibria outside $Z_{i,j}$.

\begin{theorem}\label{th:invariance}
Let $e_i$ be a locally stable pure Nash equilibrium, assume that Assumption A holds, that $e_i \notin Z_{i,j}$  and that the connected components of $\Delta \backslash Z_{i,j}$ are invariant for RD and BRD. Then ${\cal B}_{BRD}(e_i)={\cal B}_{RD}(e_i)$.
\end{theorem}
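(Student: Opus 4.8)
The plan is to fix the geometry first. Write $C^+$ for the connected component of $\Delta\setminus Z_{i,j}$ that contains $e_i$, and $C^-$ for the other one. Since $e_i$ is a locally stable pure equilibrium, strategy $i$ is the strict best response near $e_i$, so $(Ax)_i>(Ax)_j$ throughout $C^+$ and $(Ax)_i<(Ax)_j$ throughout $C^-$; in particular $S_i\subseteq C^+$. Because $C^+$ and $C^-$ are invariant open sets for each dynamics, their common relative boundary $Z_{i,j}$ is invariant as well, so for both RD and BRD the simplex $\Delta$ splits into the three invariant pieces $C^+$, $Z_{i,j}$ and $C^-$.

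The first substantive step is to confine both basins to $C^+$. An orbit issued from $C^-$ remains in $C^-$, so its $\omega$-limit lies in $\overline{C^-}$ and cannot equal $e_i\notin\overline{C^-}$; an orbit issued from $Z_{i,j}$ remains on the closed invariant set $Z_{i,j}$, which again forbids convergence to $e_i\notin Z_{i,j}$. Hence $\mathcal{B}_{RD}(e_i),\mathcal{B}_{BRD}(e_i)\subseteq C^+$, and everything reduces to proving that inside $C^+$ the two dynamics declare exactly the same points convergent to $e_i$.

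The comparison inside $C^+$ rests on a monotone quantity that the two flows share. For RD,
$$
\frac{d}{dt}\left(\frac{x_i}{x_j}\right)=\frac{x_i}{x_j}\left[(Ax)_i-(Ax)_j\right]>0\quad\text{on }C^+,
$$
so $x_i/x_j$ is strictly increasing. The same ratio is non-decreasing for BRD: in $S_i\subseteq BR_i$ the flow $\dot x=e_i-x$ gives $\frac{d}{dt}\log(x_i/x_j)=1/x_i>0$, while in any region $BR_k$ with $k\neq i,j$ (strategy $j$ is never a best response in $C^+$, being dominated by $i$ there) the flow $\dot x=e_k-x$ leaves $x_i/x_j$ constant. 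Thus neither dynamics can drift back toward $Z_{i,j}$, the limit $\lim_{t\to\infty}x_i/x_j$ exists in $(0,+\infty]$, and in $C^+$ no cyclic or recurrent behaviour is possible; the admissible $\omega$-limit sets are the equilibria of $\overline{C^+}$, and any fully mixed equilibrium $x^*$ sits on $Z_{i,j}$, hence on $\partial C^+$ rather than in the interior.

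It then remains to show that the two dynamics route the same points to $e_i$ rather than to a competing attractor of $\overline{C^+}$ (for instance a stable $e_k$ lying on $Z_{i,j}\cap\partial C^+$). Here one invokes Proposition 5.1 of Hofbauer {\em et al.} (2009): the limit of the time-averages of an RD orbit is an internally chain transitive set invariant for BRD. The shared non-decreasing ratio rules out such a set straddling $\partial S_i$, the common best-response partition of $C^+$ governs both flows, and the local stability of $e_i$ is shared by RD and BRD. Matching the separatrices is the main obstacle: one must exclude a positive-measure set converging to $e_i$ under one dynamics but to a competing attractor under the other. Controlling this boundary — using the monotone ratio to show the basins are ``upper sets'' driven monotonically away from $Z_{i,j}$, and Proposition 5.1 to force RD's asymptotics onto the BRD attractors — is precisely what closes the argument and yields $\mathcal{B}_{RD}(e_i)=\mathcal{B}_{BRD}(e_i)$.
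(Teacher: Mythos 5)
Your geometric setup is sound and partly more explicit than the paper's own proof: the decomposition of $\Delta$ into $C^+$, $Z_{i,j}$ and $C^-$, the confinement step showing ${\cal B}_{RD}(e_i),{\cal B}_{BRD}(e_i)\subseteq C^+$ (which the paper leaves implicit), and the computation that $x_i/x_j$ is increasing for RD on $C^+$ and non-decreasing for BRD (with derivative $1/x_i$ in $BR_i$ and zero in $BR_k$, $k\neq i,j$) are all correct. But the proof does not close. Your final paragraph is a promissory note: you state that one must ``exclude a positive-measure set converging to $e_i$ under one dynamics but to a competing attractor under the other'' and then declare that controlling this boundary ``is precisely what closes the argument'' --- without giving any argument. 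That exclusion \emph{is} the theorem. Nothing in your text shows why a point of $C^+$ converging under RD to some other stable equilibrium in $\overline{C^+}$ must do the same under BRD; the monotone ratio alone does not forbid an RD orbit in $C^+$ from converging to a point of $Z_{i,j}$ (the ratio may simply tend to a finite limit), so the separatrix-matching problem you pose is genuinely open as you leave it.

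The idea you are missing is that under the hypotheses there are no separatrices inside $C^+$ at all: both basins equal the \emph{entire} component, so their equality drops out with no boundary-matching whatsoever. The paper's proof observes that on the component containing $e_i$ the best-response is $e_i$, hence $C^+$ contains no invariant set for BRD other than (the approach to) $e_i$; every BRD orbit in $C^+$ therefore converges to $e_i$, and, by Proposition 5.1 of Hofbauer \emph{et al.}\ (2009) exactly as in the proof of Theorem~\ref{th:basins}, the time-average --- and hence the trajectory --- of every RD orbit in $C^+$ converges to $e_i$ as well. (Even in the weaker reading where sectors $BR_k$, $k\neq i,j$, meet $C^+$, as happens for instance in class $4_1$, the operative fact is that no singularity of BRD lies in $C^+$, so the piecewise-linear BRD flow must exit each such sector and end in $BR_i$; strategy $j$ is never a best response in $C^+$ by your own inequality $(Ax)_i>(Ax)_j$.) Thus $C^+\subseteq{\cal B}_{RD}(e_i)\cap{\cal B}_{BRD}(e_i)$, which combined with your confinement step gives ${\cal B}_{RD}(e_i)={\cal B}_{BRD}(e_i)=C^+$. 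Your ``competing attractor of $\overline{C^+}$'' can only lie on $Z_{i,j}\cup\partial\Delta$ outside $C^+$ and attracts no point of $C^+$ precisely because of this absence of BRD-invariant sets in the component --- the one substantive fact your proposal never establishes.
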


\begin{proof}
Since $Z_{i,j}$ divides $\Delta$ into two invariant connected components and $e_i \notin Z_{i,j}$, $e_i$ belongs to one of the invariant connected components. For $x$ in the connected component containing $e_i$ it is $BR(x)=e_i$ and hence, there are no other invariant sets in this connected component. Then, the orbit of $x$ under RD also converges to $e_i$ (as in the proof of Theorem~\ref{th:basins}), finishing the proof.
\end{proof}

Even though Theorem~\ref{th:invariance} is not stated for non-pure Nash equilibria on the boundary,  the cases illustrated by Figures 7 and 8 show that an analogous result may apply. This is beyond the scope of the present article as the interior of the boundary can accommodate complicated dynamics around a Nash equilibrium in higher dimensions.

\section{Examples}\label{sec:examples}

We illustrate our results by looking at all possible $3 \times 3$ games. These games have been divided by Zeeman (1980) into $19$ classes. Not all of these classes correspond to games with one fully mixed Nash equilibrium and at least one pure Nash equilibrium. In fact, the games belonging to classes $2$, $3$, $-4_2$, $-6_3$, $-6_4$, $7_3$, $8$, $-9_2$ and $-10_2$ do not have a fully mixed Nash equilibrium, and the games in class $1$ do not have pure Nash equilibria, see Figure 11 in Zeeman (1980). We illustrate the equivalence of learning outcomes for the remaining eight classes, thus completely addressing all $3 \times 3$ games where establishing equivalence of learning outcomes (or lack thereof) makes sense.

Since we are interested in illustrating learning procedures that lead to a pure Nash equilibrium, we use the symmetric matrix to that considered in Zeeman (1980) 
whenever we want to change the stability of the interior Nash equilibrium from stable to unstable. We note that Zeeman's classification is robust in the sense that small perturbations of the matrices defining the game lead to qualitatively equivalent dynamics. The symmetric of a matrix is obtained by multiplication of all entries by $(-1)$. This interchanges the stability of the singularities in the game. By presenting here the diagrams for BRD, we illustrate how large the intersection of the basins of attraction for the two dynamics is. We restrict our attention to the cases relevant in the illustration of Theorems~\ref{th:basins} and \ref{th:invariance}.
In Table ~\ref{table1} we list the classes and the matrices we use for our illustration, preserving the order of Zeeman (1980). The line below each matrix indicates whether there is a sign reversal with respect to the matrix used by Zeeman (1980). 
We choose to reverse the sign of the elements in the matrix when this creates more stable pure equilibria than the original matrix, and the interior Nash equilibrium becomes unstable.

\begin{table}[h!]
\begin{center}
\begin{tabular}{|c|c|c|}
\hline 
Class & $5_1$ & $6_1$ \\
\hline 
Matrix $A$ & $\left(\begin{array}{ccc} 
0 & -3 & 1\\
-1 & 0 & -1 \\
-3 & 1 & 0 
\end{array} \right)$ & 
$\left(\begin{array}{ccc} 
0 & -1 & -1\\
1 & 0 & -3 \\
-1 & -1 & 0 
\end{array} \right)$ \\
\hline 
Sign reversal & yes & yes \\
\hline 
\hline 
Class & $7_1$ & $10_1$ \\
\hline 
Matrix $A$ & $\left(\begin{array}{ccc} 
0 & 6 & -4\\
-3 & 0 & 5 \\
-1 & 3 & 0 
\end{array} \right)$ &
$\left(\begin{array}{ccc} 
0 & -1 & -1\\
-1 & 0 & -1 \\
-1 & -1 & 0 
\end{array} \right)$ \\
\hline 
Sign reversal & no & yes \\
\hline 
\hline 
Class & $4_1$ & $6_2$ \\
\hline 
Matrix $A$ & $\left(\begin{array}{ccc} 
0 & -3 & 1\\
-3 & 0 & 1 \\
-1 & -1 & 0 
\end{array} \right)$ &
$\left(\begin{array}{ccc} 
0 & -1 & -3\\
1 & 0 & -5 \\
-1 & -3 & 0 
\end{array} \right)$ \\
\hline 
Sign reversal & yes & yes \\
\hline 
\hline 
Class & $7_2$ & $9_1$ \\
\hline 
Matrix $A$ & $\left(\begin{array}{ccc} 
0 & 1 & -1\\
-1 & 0 & 1 \\
-1 & 1 & 0 
\end{array} \right)$ &
$\left(\begin{array}{ccc} 
0 & -1 & 3\\
-1 & 0 & 3 \\
1 & 1 & 0 
\end{array} \right)$ \\
\hline 
Sign reversal & no & no \\
\hline 
\end{tabular}
\caption{Zeeman's classes and their matrices. We indicate the existence of a sign reversal with respect to the matrix originally used by Zeeman.\label{table1}}
\end{center}
\end{table}

A straightforward calculation shows that the indifference sets for BRD for each class in Table~\ref{table1} are given in Table~\ref{table2}. For classes $6_1$, $10_1$, $4_1$, $7_2$ and $9_1$ there is at least one indifference set which is invariant. The invariant indifference sets provide a bound for the basins of attraction of Nash equilibria for RD provided the Nash equilibrium is not on the indifference set. In this case, the basins of attraction coincide under both learning procedures and the question of basin dominance in Zhang and Hofbauer (2015) has the same answer in both replicator and best-response dynamics.

Although Proposition~\ref{prop:no_cyclic} is not an equivalence, we do observe the existence of cyclic behaviour in the games in the remaining classes. The cyclic behaviour in RD corresponds also to cyclic BRD.

\begin{table}[h!]
\begin{center}
\begin{tabular}{|c|c|c|}
\hline 
Class & $5_1$ & $6_1$ \\
\hline
$Z_{1,2}$ & $x_1-3x_2+2x_3=0$ & $x_1+x_2-2x_3=0$\\
\hline
$Z_{1,3}$ & $ 3x_1-4x_2+x_3=0$ & $x_1=x_3$ \\
\hline
$Z_{2,3}$ & $2x_1-x_2-x_3=0$ & $2x_1+x_2-3x_3=0$ \\
\hline
\hline
Class & $7_1$ & $10_1$ \\
\hline
$Z_{1,2}$  & $3x_1+6x_2-9x_3=0$ & $x_1=x_2$ \\
\hline
$Z_{1,3}$ & $x_1+3x_2-4x_3=0$ & $x_1=x_3$ \\
\hline
$Z_{2,3}$ & $2x_1+3x_2-5x_3=0$ & $x_2=x_3$ \\
\hline
\hline
Class & $4_1$ & $6_2$ \\
\hline
$Z_{1,2}$ & $x_1=x_2$ & $x_1+x_2-2x_3=0$ \\
\hline
$Z_{1,3}$ & $x_1-2x_2+x_3=0$ & $x_1+2x_2-3x_3=0$ \\
\hline
$Z_{2,3}$ & $2x_1-x_2-x_3=0$ & $2x_1+3x_2-5x_3=0$ \\
\hline
\hline
Class & $7_2$ & $9_1$ \\
\hline
$Z_{1,2}$ & $x_1+x_2-x_3=0$ & $x_1=x_2$ \\
\hline
$Z_{1,3}$ & $x_1=x_3$ & $x_1+2x_2-3x_3=0$ \\
\hline
$Z_{2,3}$ & $x_2=x_3$ & $2x_1+x_2-3x_3=0$ \\
\hline
\end{tabular}
\caption{The indifference sets for the classes of Table~\ref{table1} consist of points satisfying the equation in each line. \label{table2}}
\end{center}
\end{table}

In Figures~\ref{C5_1} -- \ref{C9_1}, we show the phase diagrams for RD and BRD for the above classes. The diagram for class $6_1$ for RD is here corrected as there is no cyclic behaviour. This is a consequence of Proposition~\ref{prop:no_cyclic} but can also be checked directly by computing the eigenvalues of the Jacobian matrix for RD at the interior Nash equilibrium.

\begin{figure}[!htb]
\centerline{\includegraphics[width=0.8\textwidth]{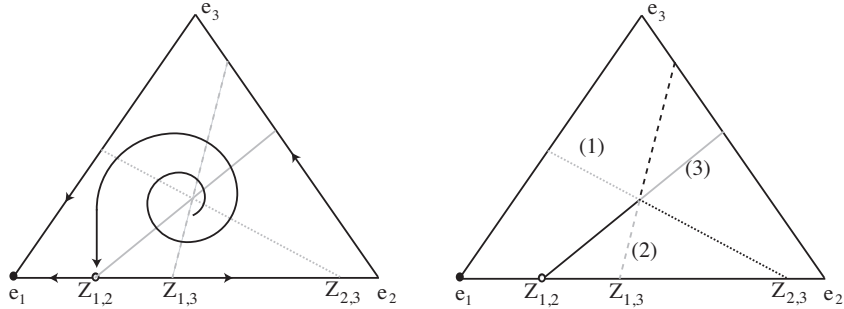}}
\caption{\small{RD (left) and BRD (right) for class $5_1$. There is only one stable Nash equilibrium, $e_1$. For RD, its basin of attraction is $\Delta$ except for the points on the stable manifold of $e_{1,2}$. For BRD, the only points that may not converge to $e_1$ are those on $Z_{1,2}$ below the interior Nash equilibrium.}
\label{C5_1}}
\end{figure}

\begin{figure}[!htb]
\centerline{\includegraphics[width=0.8\textwidth]{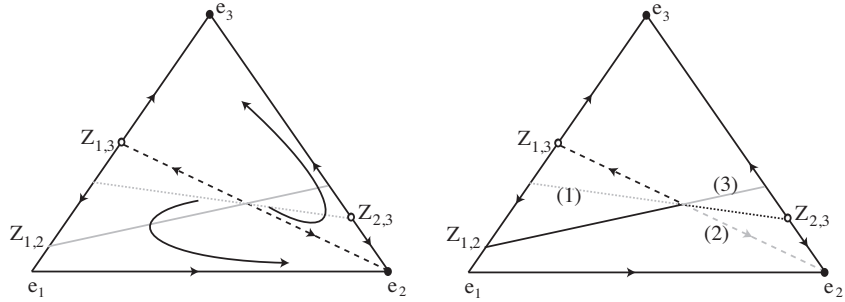}}
\caption{\small{RD (left) and BRD (right) for class $6_1$. There are two stable Nash equilibria, $e_2$ and $e_3$. For RD, the line $Z_{1,3}$ is invariant and constitutes the boundary of the basins of attraction of $e_2$ and $e_3$. For BRD, the boundary between the basins of attraction of $e_2$ and $e_3$ is the part of $Z_{1,3}$ above the interior Nash equilibrium and the part of $Z_{2,3}$ to the right of the interior Nash equilibrium. Hence, $e_2$ attracts more initial conditions under BRD than under RD.}
\label{C6_1}}
\end{figure}

\begin{figure}[!htb]
\centerline{\includegraphics[width=0.8\textwidth]{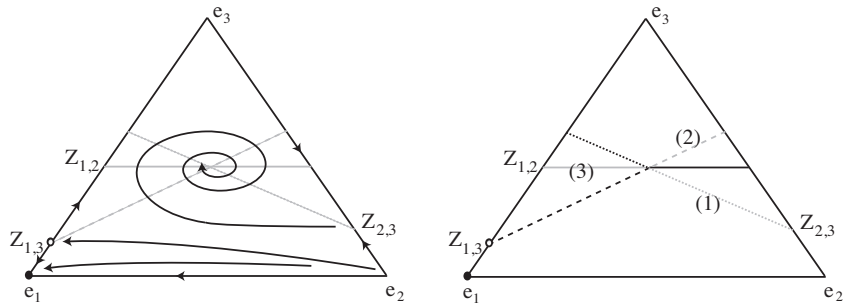}}
\caption{\small{RD (left) and BRD (right) for class $7_1$ . There are two stable Nash equilibria but only $e_1$ is pure. Its basin of attraction is much larger under BRD than under RD.}
\label{C7_1}}
\end{figure}

\begin{figure}[!htb]
\centerline{\includegraphics[width=0.8\textwidth]{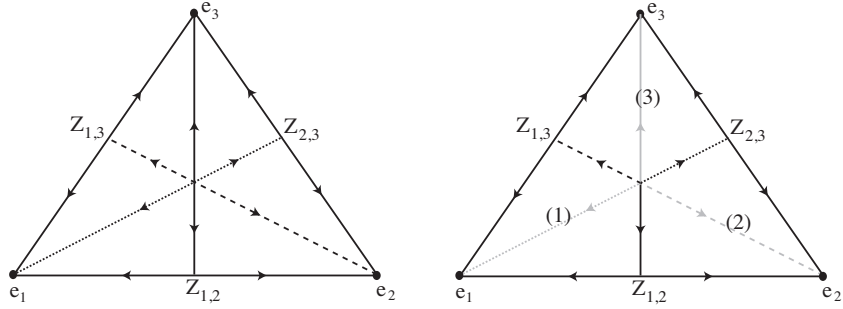}}
\caption{\small{RD (left) and BRD (right) for class $10_1$. All three pure strategies are Nash equilibria. Their basins of attraction totally coincide under both dynamics. This is because all three indifference sets are invariant and the boundary of the basins of attraction is made of parts of these indifference sets.}
\label{C10_1}}
\end{figure}

\begin{figure}[!htb]
\centerline{\includegraphics[width=0.8\textwidth]{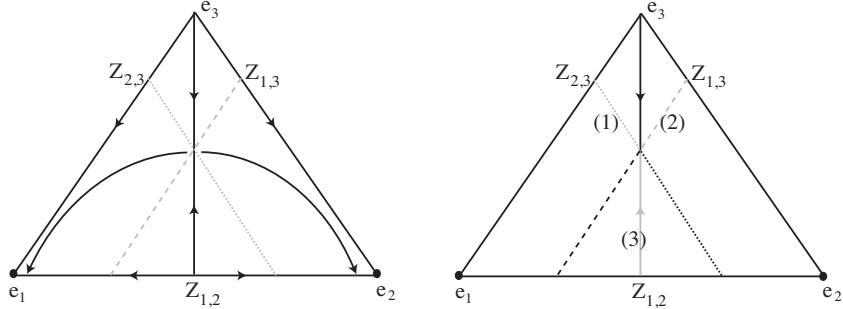}}
\caption{\small{RD (left) and BRD (right) for class $4_1$. The two stable Nash equilibria are $e_1$ and $e_2$. Their basins of attraction under RD and BRD coincide as they are divided by $Z_{1,2}$, which is the stable manifold of the interior Nash equilibrium.}
\label{C4_1}}
\end{figure}

\begin{figure}[!htb]
\centerline{\includegraphics[width=0.8\textwidth]{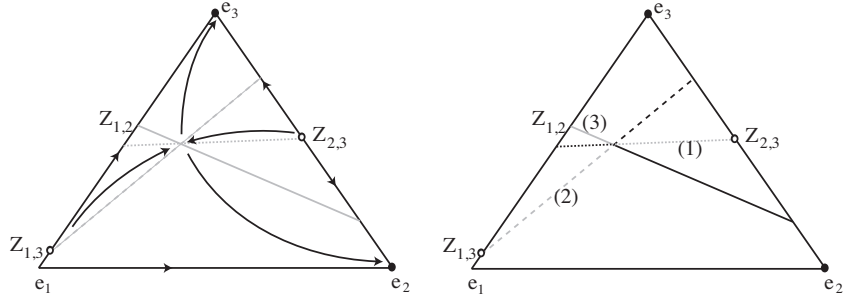}}
\caption{\small{RD (left) and BRD (right) for class $6_2$. There are two stable Nash equilibria, $e_2$ and $e_3$. Even though the intersection of the basins of attraction under RD and BRD is a set of positive measure, they are different. For RD, the stable manifold of the interior Nash equilibrium divides the basin of attraction of $e_2$ from that of $e_3$. For BRD, this division is made by $Z_{2,3}$ to the left of the interior Nash equilibrium and $Z_{1,3}$ to its right.}
\label{C6_2}}
\end{figure}

\begin{figure}[!htb]
\centerline{\includegraphics[width=0.8\textwidth]{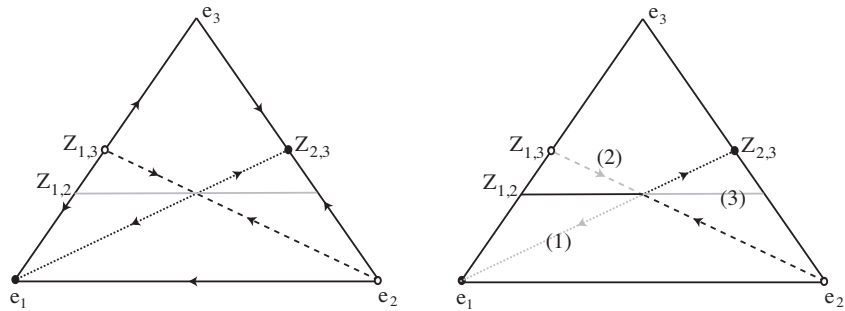}}
\caption{\small{RD (left) and BRD (right) for class $7_2$. There are two stable Nash equilibria but only $e_1$ is pure. The basins of attraction of $e_1$ and $e_{2,3}$ are the same under RD and BRD and are bounded by the invariant set $Z_{1,3}$.}
\label{C7_2}}
\end{figure}

\begin{figure}[!htb]
\centerline{\includegraphics[width=0.8\textwidth]{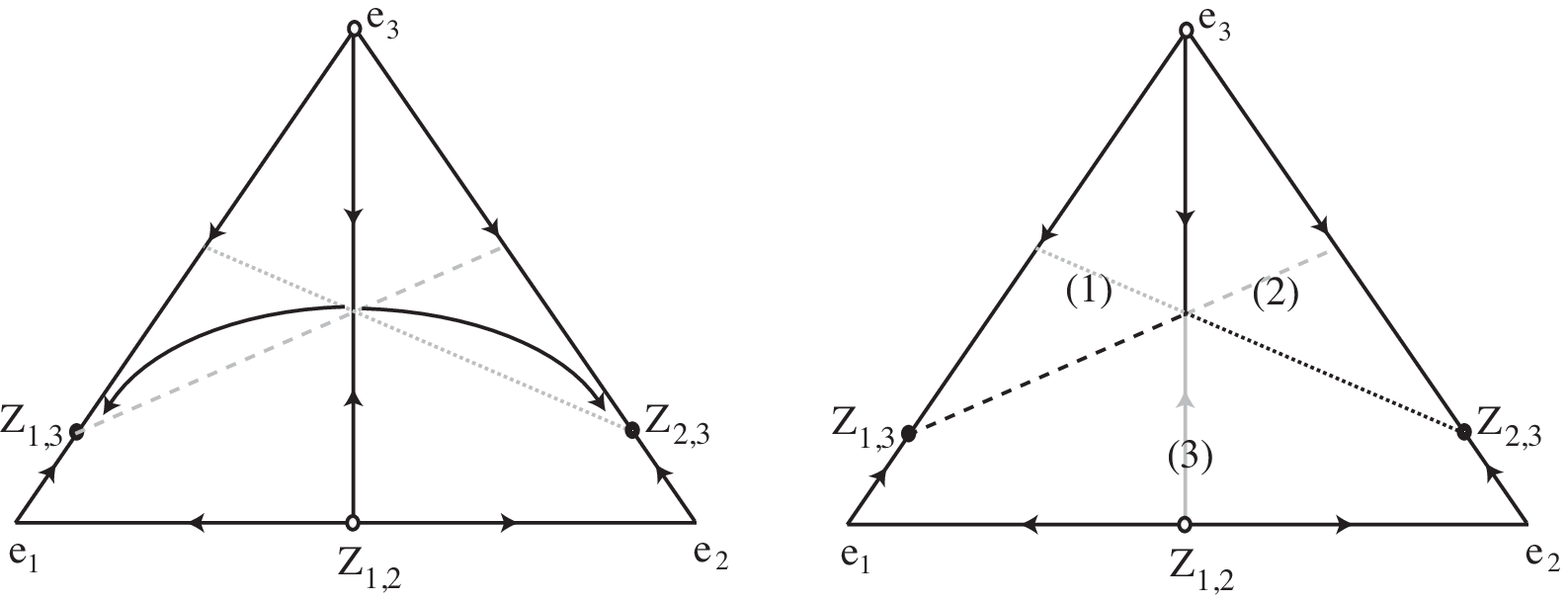}}
\caption{\small{RD (left) and BRD (right) for class $9_1$. The two stable equilibria are $e_{1,3}$ and $e_{2,3}$. Their basins of attraction under RD and BRD coincide and their boundary is the invariant indifference set $Z_{1,2}$.}
\label{C9_1}}
\end{figure}

In all the figures we use a full dot to indicate a stable Nash equilibrium and an open dot to indicate the Nash equilibrium is unstable. We do not mark the interior Nash equilibrium in order to preserve the clarity of the figures. It is easy to see this lies on the intersection of all the indifference lines. We use a full line for indicating $Z_{1,2}$, a dashed line for $Z_{1,3}$ and a dotted line for $Z_{2,3}$. When the indifference lines are invariant, we place arrows on them to indicate the direction of play. For RD, we present non-invariant indifference lines in grey. For BRD, we use grey when there is no change in the best-response along the indifference line. Also for BRD, we indicate in brackets the action which constitutes a best-response in the set of points bounded by black parts of indifference lines. As the boundary of $\Delta$ is invariant under RD, we indicate with arrows the direction of the flow on $\partial \Delta$. We denote by $e_{i,j}$ the Nash equilibrium in $Z_{i,j} \cap \{ x_k=0; \; k \neq i,j \}$.

The class in Figure~\ref{C5_1} does not satisfy Hypothesis (H2) in Theorem~\ref{th:basins}. The class in Figure~\ref{C7_1} does not satisfy any of the hypotheses in the same theorem. In both cases, the interior Nash equilibrium has complex eigenvalues. We note that the instability of the interior Nash equilibrium in Figure~\ref{C5_1} suffices to produce highly coincident basins of attraction. However, in Figure~\ref{C5_1}, not all points in $S_1$ belong to ${\cal B}_{RD}(e_1)$ since some converge to the intersection of $Z_{1,2}$ with $\partial \Delta$. We note that $S_1$ is not invariant for RD in this case.
The class in Figure~\ref{C7_1} is used in Subsection~\ref{sec:no_good} to construct a family of games with an interior Nash equilibrium for which ${\cal B}_{RD}(e_1)$ and ${\cal B}_{BRD}(e_1)$ have vanishing intersection.

Figures~\ref{C6_1}, \ref{C10_1}, \ref{C4_1}, \ref{C7_2} and \ref{C9_1} show invariant indifference sets. This is not sufficient to ensure that the basins of attraction exactly coincide under RD and BRD. However, these do coincide in Figures~\ref{C10_1}, \ref{C4_1}, \ref{C7_2} and \ref{C9_1}, where for each Nash equilibrium on the boundary there is an invariant indifference set that does not contain the Nash equilibrium, illustrating Theorem~\ref{th:invariance}.
Note that, in Figures~\ref{C4_1} and \ref{C9_1}, one invariant indifference line is enough to guarantee the coincidence of the basins of attraction for both dynamics. This is insufficient in Figure~\ref{C6_1}, where one pure Nash equilibrium belongs to the only invariant indifference set. In Figure~\ref{C10_1} all indifference lines are invariant and it is then clear that the basins of attraction must coincide.

Figure~\ref{C6_2} illustrates the game already used in Figure~\ref{fig:set_S_i}. Even though the intersection of the basins of attraction of $e_2$ and $e_3$ under RD and BRD is a set of positive measure, they are different. For RD, the stable manifold of the interior Nash equilibrium divides the basin of attraction of $e_2$ from that of $e_3$. For BRD, this division is made by $Z_{2,3}$ to the left of the interior Nash equilibrium and $Z_{1,3}$ to its right. Observe that $S_2 \subset {\cal B}_{RD}(e_2) \cap {\cal B}_{BRD}(e_2)$.

\bigbreak

We illustrate the convergence to equilibrium for classes $4_1$ and $6_2$ corresponding to Figures~\ref{C4_1} and \ref{C6_2}, respectively. In Figure~\ref{dynC4_1}, the BRD and RD are illustrated for class $4_1$. For BRD the figure depicts the trajectory from several distinct initial conditions whereas for RD we plot the vector field (using Mathematica). 
In Figure~\ref{dynC6_2}, we illustrate the same but for class $6_2$.

\begin{figure}[!htb]
\centerline{\includegraphics[width=0.8\textwidth]{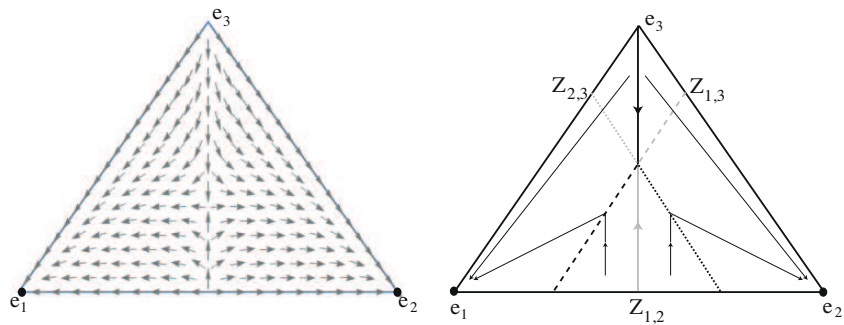}}
\caption{\small{RD (left) and BRD (right) for class $4_1$.}
\label{dynC4_1}}
\end{figure}

\begin{figure}[!htb]
\centerline{\includegraphics[width=0.8\textwidth]{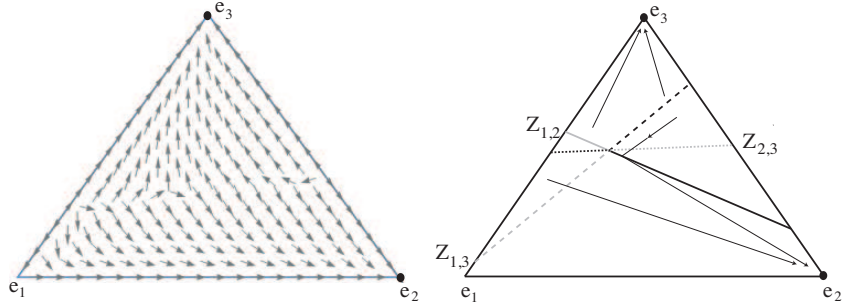}}
\caption{\small{RD (left) and BRD (right) for class $6_2$.}
\label{dynC6_2}}
\end{figure}

Recall that in class $4_1$ the basins of attraction of $e_1$ and $e_2$ coincide for both types of dynamics and this is clear from Figure~\ref{dynC4_1}. That the intersection of the basins of attraction for RD and BRD contains a set of positive measure for class $6_2$ is also clear from Figure~\ref{dynC6_2}.

\subsection{Basins of attraction with vanishing intersection}\label{sec:no_good}

We present two families of games for which a pure Nash equilibrium has basins of attraction for RD and BRD with vanishing intersection. The first example has no fully mixed Nash equilibrium and was presented by Golman and Page (2010). To illustrate the fact that the existence of a fully mixed Nash equilibrium is not sufficient for the non-vanishing intersection of the basins of attraction, we construct an example with a fully mixed Nash equilibrium and such that the basins of attraction for RD and BRD have vanishing intersection. This example consists in a family containing class $7_1$ for $n=1$. 

\paragraph{An example with no fully mixed Nash equilibrium:} Consider the example presented by Golman and Page (2010).
The matrix defining the family of games is
$$
\left( \begin{array}{ccc}
1 & -N & -N^{-1} \\
2-N^3 & 2 & 2 \\
0 & 0 & 0
\end{array} \right); \; \; \;  N >1.
$$
Subtracting $1$ from the first column and $2$ from the second, we obtain the equivalent family defined by
$$
\left( \begin{array}{ccc}
0 & -N-2 & -N^{-1} \\
1-N^3 & 0 & 2 \\
-1 & -2 & 0
\end{array} \right),
$$
which belongs to class $6$ of Zeeman's classification (Zeeman 1980). 
There are no invariant indifference lines and the intersection of the indifference lines occurs outside the simplex, that is, there is no interior Nash equilibrium. There are four Nash equilibria: $e_1$, $e_2$, $e_{1,2}$ and $e_{1,3}$. The pure Nash equilibria are locally stable.
The indifference lines depend on $N$ and are given by
\begin{eqnarray*}
Z_{1,2} & = & \{ x \in \Delta: \; \; (N^3-1)x_1-(N+2)x_2-\frac{2N+1}{N}x_3=0 \} \\
Z_{1,3} & = & \{ x \in \Delta: \; \; x_1-Nx_2-\frac{1}{N}x_3=0 \} \\
Z_{2,3} & = & \{ x \in \Delta: \; \; (2-N^3)x_1+2x_2+2x_3=0 \}.
\end{eqnarray*}
As $N \rightarrow +\infty$ the indifference lines $Z_{1,2}$ and $Z_{2,3}$ move closer to the side of $\Delta$ connecting $e_2$ to $e_3$ whereas $Z_{1,3}$ moves closer to the side of $\Delta$ connecting $e_1$ to $e_2$. As illustrated in Figure~\ref{fig:Golman}, for BRD the basins of attraction of $e_1$ and $e_2$ are divided by $Z_{1,3}$, while for RD these basins of attraction are divided by the invariant manifold connecting $e_{1,2}$ and $e_{1,3}$. As $N \rightarrow +\infty$, $Z_{1,3}$ moves so that, for BRD, the basin of attraction of $e_1$ shrinks while the invariant manifold connecting $e_{1,2}$ and $e_{1,3}$ moves so that, for RD, the same basin takes up almost all of $\Delta$. 

Comparing this with Figure~\ref{C6_2} helps understand the role of the interior Nash equilibrium. In Figure~\ref{C6_2}, the basins of attraction for RD and BRD differ as much as the invariant line $Z_{2,3}$ differs from an invariant manifold for RD. In this case, however, the invariant manifold connects the interior Nash equilibrium, which also belongs to $Z_{2,3}$, and therefore cannot be transformed away from $Z_{2,3}$. 
The next example shows the importance of this feature of the invariant manifold for the equivalence of learning outcomes.

\begin{figure}[!htb]
\centerline{\includegraphics[width=1\textwidth]{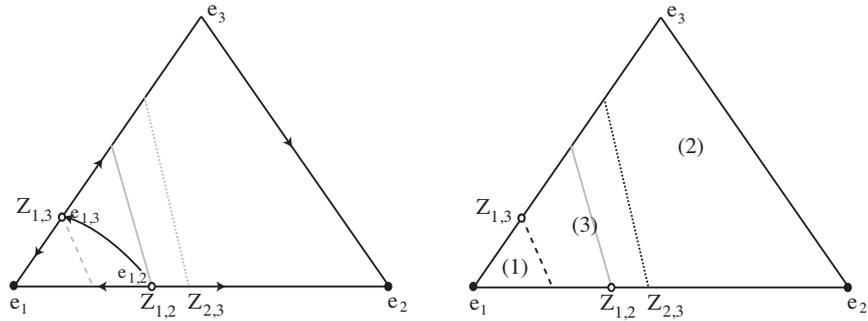}}
\caption{\small{RD (left) and BRD (right) for the family of games in (Golman and Page 2010). 
For RD the basins of attraction of $e_1$ and $e_2$ are divided by the invariant manifold connecting $e_{1,2}$ and $e_{1,3}$. For BRD the basins of attraction of $e_1$ and $e_2$ are divided by $Z_{1,3}$. As $N \rightarrow +\infty$, the invariant manifold moves towards $\partial \Delta$ along the line from $e_2$ to $e_3$, whereas $Z_{1,3}$ moves towards $\partial \Delta$ along the line from $e_1$ to $e_3$.}
\label{fig:Golman}}
\end{figure}

\paragraph{An example with a fully mixed Nash equilibrium:} Consider the learning mechanisms defined by the following family of matrices
$$
A_n=\left( \begin{array}{ccc}
0 & 6 & -(3n+1)/n \\
 & & \\
-(2n+1)/n & 0 & 5 \\
 & & \\
-1/n & 3 & 0
\end{array} \right); \;\; \; n \in \N.
$$
For $n=1$ we recover the matrix used by Zeeman (1980) for class $7_1$, whose dynamics are illustrated in Figure~\ref{C7_1}. It is easy to check that the Nash equilibria are again the interior Nash equilibrium, $e_1$ and a point $e_{1,3} = \left( (3n+1)/(3n+2), 0, 1/(3n+2)\right)$. This latter Nash equilibrium converges to $e_1$ as $n \rightarrow +\infty$. Otherwise, the dynamics are qualitatively equivalent to those in Figure~\ref{C7_1}. However, the basin of attraction of $e_1$ under RD converges to a set of measure zero. In fact, ${\cal B}_{RD}(e_1)$ is bounded by the edge $[e_1,e_2]$ and by the invariant connection $[e_2 \rightarrow e_{1,3}]$. Because $e_{1,3}$ tends to $e_1$ this bound of ${\cal B}_{RD}(e_1)$ tends to the edge $[e_1,e_2]$. BRD remains qualitatively unchanged. For RD almost all initial conditions lead to the interior Nash equilibrium whereas, for BRD, a set of positive measure of initial conditions chooses to $e_1$ instead, as $n \rightarrow +\infty$.

Recall that the interior Nash equilibrium is stable and $S_1$ is not invariant under RD so that this family does not satisfy the hypotheses in Theorem~\ref{th:basins}.

\section{Concluding remarks}

This article contributes to a better understanding of learning by two classic processes, replicator and best-response, by exposing the importance of the player's indifference among all possible strategies. In particular, even though the initial conditions of play leading to a given pure Nash equilibrium may differ according to whether learning proceeds through replicator or best-response, if the pure Nash equilibrium is locally stable, 
the existence of a fully mixed unstable Nash equilibrium, together with an invariance condition, guarantees that there is always a non-vanishing set from which learning produces the same pure Nash equilibrium as an outcome, regardless of the learning mechanism. The existence of the interior Nash equilibrium means that all types are present in the population mix (or equivalently, that players are indifferent among all possible actions). The instability of this Nash equilibrium and the invariance property of the set $S_i$ containing the pure Nash equilibrium indicate that 
arbitrarily close to total indifference, the type characterising the pure Nash equilibrium is prefered.

Our results also point towards some similarity between the two learning mechanisms and furthermore show the role of the invariance of the indifference sets in maximising this similarity. However, when the basins of attraction differ, their geometry allows room for further research on understanding these differences and their causes. This may also shed some light on the transient behaviour of trajectories, a subject beyond the scope of the present article. Hopefully, experiments will provide further insight.

\paragraph{Compliance with Ethical Standards:}
This study was partly supported by Centro de Matem\'atica da Universidade do Porto (UID/MAT/00144/2013), funded by the Portuguese Government through the Funda\c{c}\~ao para a Ci\^encia e a Tecnologia with national (Minist\'erio da Educa\c{c}\~ao e Ci\^encia) and European structural funds through the programs FEDER, under the partnership agreement PT2020, as well as by a grant from the Reitoria da Universidade do Porto.

The author declares that she has no conflict of interest.


\begin{thebibliography}{99}
 \bibitem{Beggs2005}
 A.W.\ Beggs (2005)
 On the convergence of reinforcement learning,
 \emph{J.\ Econ.\ Theory} 122, 1 -- 36.

 \bibitem{Benaim}
 M.\ Bena\"{\i}m, J.\ Hofbauer and S.\ Sorin (2005)
 Stochastic approximations and differential inclusions,
 \emph{SIAM J.\ Control Optim.}, 44 (1), 328 -- 348.
 
\bibitem{Brown1949}  G.W. Brown (1949) Some Notes on Computation of
Games Solutions, The Rand Corporation, P-78, April.

\bibitem{DuffyHopkins2012}
J.\ Duffy and E. Hopkins (2005)
Learning, information, and sorting in market entry games: theory and evidence,
\emph{Games Econ.\ Behav.} 51, 31--62.

\bibitem{ErevRoth1998}
I.\ Erev and A.\ Roth (1998)
Predicting how people play games: reinforcement learning in experimental games with unique, mixed strategy equilibria,
\emph{Am.\ Econ.\ Rev.} 88, 848 -- 881.

\bibitem{Golman2011}
R.\ Golman (2011)
Why learning doesn't add up: equilibrium selection with a composition of learning rules,
\emph{Int.\ J.\ Game Theory} 40, 719 -- 733.

\bibitem{GolmanPage2010}
R.\ Golman and S.E.\ Page (2010) 
Basins of attraction and equilibrium selection under different learning rules, 
\emph{J.\ Evol.\ Econ.} 20, 49 -- 72 and Erratum, \emph{J.\ Evol.\ Econ.} 20, 73 -- 75.

\bibitem{Hofbauer2011}
J.\ Hofbauer (2011)
Deterministic evolutionary game dynamics,
in: \emph{Evolutionary Game Dynamics} (ed.\ K.\ Sigmund),
Proceedings of Symposia in Applied Mathematics, vol.\ 69, Amer. Math. Soc., 61-79. 

\bibitem{HofbauerSandholm2009}
J.\ Hofbauer and W.H.\ Sandholm (2009)
Stable games and their dynamics,
\emph{J.\ Econ.\ Theory} 144, 1665 -- 1693.

\bibitem{HofbauerSigmund}
{J.\ Hofbauer and K.\ Sigmund}, {Evolutionary Games and Population Dynamics}, \emph{Cambridge University Press}, 2002.

\bibitem{HofbauerSigmund2003}
{J.\ Hofbauer and K.\ Sigmund} (2003)
Evolutionary game dynamics,
\emph{B.\ Am.\ Math.\ Soc.}, 40 (4), 479 -- 519.
 
 \bibitem{HSV}
J.\ Hofbauer, S.\ Sorin and Y.\ Viossat (2009)
Time average replicator and best reply dynamics, 
\emph{Math.\ Operations Res.\ } 10 (2), 263--269.

 \bibitem{Matsui}
A.\ Matsui (1992) 
Best response dynamics and socially stable strategies,
\emph{J.\  Econ.\ Theory} 57, 343--362.

\bibitem{Mengel2012}
F.\ Mengel (2012)
Learning across games,
\emph{Games Econ.\ Behav.} 74, 601 -- 619.

 \bibitem{Ochea}
 {M.I.\ Ochea},
 {\em Essays on nonlinear evolutionary game dynamics},
 PhD Thesis, University of Amsterdam (2010) (http://hdl.handle.net/11245/2.71860).
 
  \bibitem{RothErev1995}
 A.\ Roth and I.\ Erev (1995)
 Learning in extensive-form games: experimenetal data and simple dynamic models in the intermediate term,
 \emph{Games Econ.\ Behav.} 8, 164 -- 212.
 
 \bibitem{Sandholm2012}
 W.H.\ Sandholm (2012)
 Evolutionary game theory,
in: \emph{Comput.\ Complex.} (ed.\ R.A.\ Meyers), Springer, 1000 -- 1029.

\bibitem{Zeeman1980}
{ E.C.\ Zeeman} (1980)
{Population Dynamics from Game Theory}, in: 
\emph{Global Theory of Dynamical Systems} (Proceedings Conference Northwestern University 1979), Lecture Notes in Mathematics, 819, 
472--497.

\bibitem{ZhangHofbauer2015}
B.\ Zhang and J.\ Hofbauer (2015)
Equilibrium selection via replicator dynamics in $2 \times 2$ coordination games,
\emph{Int.\ J.\ Game Theory} 44, 433 -- 448.

\end{thebibliography}
\end{document}